\newtheorem{thm}{Theorem}[section]
\newtheorem{lem} [thm]{Lemma}
\theoremstyle{definition}	% This puts examples, remarks, et al. into roman type.
\newtheorem{defn}[thm]{Definition}
\raggedbottom \pagestyle{myheadings} \hbadness = 10000 \tolerance = 10000
\numberwithin{equation}{section}
\newcommand\diag{\operatorname{diag}}
\begin{document}
	\label{'ubf'}
	\setcounter{page}{1} %Put here the starting page number
	\markboth {\hspace*{-9mm} \centerline{\footnotesize \sc
			% Put here the left page top label
		Signed Distance Laplacian Matrices for Signed Graphs}
	}
	{ \centerline {\footnotesize \sc
			%put here the author's name
				Roshni, Germina, Hameed, Zaslavsky
		} \hspace*{-9mm}
	}

	\begin{center}
		{
			{\huge \textbf{Signed Distance Laplacian Matrices for Signed Graphs
					% Put the title of the paper here
				}
			}
			\\
			
			\medskip

			Roshni T Roy\footnote{\small Department of Mathematics, Central University of Kerala, Kasaragod - 671316,\ Kerala,\ India.\ Email:roshnitroy@gmail.com}
			Germina K A\footnote{\small  Department of Mathematics, Central University of Kerala, Kasaragod - 671316,\ Kerala,\ India.\ Email: srgerminaka@gmail.com}
			Shahul Hameed K\footnote{\small  Department of Mathematics, K M M Government\ Women's\ College, Kannur - 670004,\ Kerala,  \ India.  E-mail: shabrennen@gmail.com}
			Thomas Zaslavsky\footnote{\small Department of Mathematical Sciences, Binghamton University (SUNY), Binghamton, NY 13902-6000, U.S.A.  E-mail: zaslav@math.binghamton.edu}
			
		}
	\end{center}
	\thispagestyle{empty}
	\begin{abstract}
		A signed graph is a graph whose edges are labeled either positive or negative. Corresponding to the two signed distance matrices defined for signed graphs, we define two signed distance laplacian matrices. We characterize balance in signed graphs using these matrices and find signed distance laplacian spectra of some classes of unbalanced signed graphs.
	\end{abstract}

	\textbf{Key Words:} Signed graphs, Signed distance matrix, Signed distance laplacian matrix, Signed distance laplacian spectrum.
	
	\textbf{Mathematics Subject Classification (2010):} Primary 05C12, Secondary 05C22, 05C50, 05C75.

	\section{Introduction}
	
	Throughout this article, unless otherwise mentioned, by a graph we mean a finite, connected, simple graph.  For any terms which are not mentioned here, the reader may refer to \cite{fh}.
	
	A signed graph $\Sigma=(G,\sigma)$ is an underlying graph $G=(V,E)$ with a signature function $\sigma:E\rightarrow \{1,-1\}$. Two types of signed distances matrices are introduced by Hameed et al.\ \cite{sh}. In this paper, we define two signed distance laplacian matrices for signed graphs and characterize balance in signed graphs using these matrices. The signed distance laplacian spectra of some classes of unbalanced signed graphs are also studied.
	
	First we recall the definition of signed distances and corresponding signed distance matrices defined in \cite{sh}.
	Given a signed graph $\Sigma=(G,\sigma)$, the sign of a path $P$ in $\Sigma$ is defined as $\sigma(P)=\prod_{e\in E(P)} \sigma(e)$. The shortest path between two given vertices $u$ and $v$ is denoted by $P_{(u,v)}$ and the collection of all shortest paths $P_{(u,v)}$ by $\mathcal{P}_{(u,v)}$; and $d(u,v)$ denotes the usual distance between $u$ and $v$.
	
	\begin{defn}[Signed distance matrices \cite{sh}]
		Auxiliary signs are defined as:
		\par(S1) $\sigma_{\max}(u,v) = -1$ if all shortest $uv$-paths are negative, and $+1$ otherwise.
		\par(S2) $\sigma_{\min}(u,v) = +1$ if all shortest $uv$-paths are positive, and $-1$ otherwise.
		
		Signed distances are:
		\par(d1) $d_{\max}(u,v) = \sigma_{\max}(u,v) d(u,v)=\max\{\sigma(P_{(u,v)}): P_{(u,v)} \in \mathcal{P}_{(u,v)} \}d(u,v).$
		\par(d2) $d_{\min}(u,v) = \sigma_{\min}(u,v) d(u,v)=\min\{\sigma(P_{(u,v)}): P_{(u,v)} \in \mathcal{P}_{(u,v)} \}d(u,v).$
		
		And the signed distance matrices are:
		\par(D1)  $D^{\max}(\Sigma)=(d_{\max}(u,v))_{n\times n}$.
		\par(D2)  $D^{\min}(\Sigma)=(d_{\min}(u,v))_{n\times n}$.
	\end{defn}
	
	\begin{defn}[\cite{sh}]
		Two vertices $u$ and $v$ in a signed graph $\Sigma$ are said to be \emph{distance-compatible} (briefly, \emph{compatible}) if $d_{min}{(u,v)}=d_{max}{(u,v)}$.  And $\Sigma$ is said to be (distance-)compatible if every two vertices are compatible. Then $D^{\max}(\Sigma)= D^{\min}(\Sigma)= D^{\pm}(\Sigma).$
	\end{defn}
The two complete signed graphs from the distance matrices $D^{\max}$ and $D^{\min}$ is defined as follows.
	\begin{defn}[\cite{sh}]
		The associated signed complete graph $K^{D^{\max}}(\Sigma)$ with respect to $D^{\max}(\Sigma)$ is obtained by  joining the non-adjacent vertices of $\Sigma$ with edges having signs
		\begin{equation*}
		\sigma(uv)= \sigma_{\max}(uv).
		\end{equation*}
		
		The associated signed complete graph $K^{D^{\min}}(\Sigma)$ with respect to $D^{\min}(\Sigma)$ is obtained by joining the non-adjacent vertices of $\Sigma$ with edges having signs
		\begin{equation*}
		\sigma(uv)= \sigma_{\min}(uv).
		\end{equation*}
	\end{defn}
	
	The distance laplacian matrix of a graph was introduced and studied by Mustapha et al.\ in \cite{ma}. We introduce two signed distance laplacian matrices for signed graphs as follows.
	
	The transmission $Tr(v)$ of a vertex $v$ is defined to be the sum of the distances from $v$ to all other vertices in $G$. That is, $Tr(v) = \sum_{u \in V(G)} d(v,u)$.
	The transmission matrix $Tr(G)$ for a graph is the diagonal matrix with diagonal entries $Tr(v_i)$.
	
	\begin{defn}	
	\rm{Now we define two signed distance laplacian matrices for signed graphs as
		\par(L1)$L^{\max}(\Sigma) = Tr(G)- D^{\max}(\Sigma).$
		\par(L2)$L^{\min}(\Sigma) = Tr(G)- D^{\min}(\Sigma).$
		
	When $\Sigma$ is compatible, $L^{\max}(\Sigma) = L^{\min}(\Sigma) = L^{\pm}(\Sigma)$.}
    \end{defn}

\section{Balanced signed graphs}
To study balance in signed graphs using signed distance laplacian matrices, it is necessary to study the laplacian matrix of weighted signed graphs. Let $\Sigma =(G,\sigma)$ be a signed graph and let $w$ be a positive weight function defined on the edges of $\Sigma$. We denote a weighted signed graph by $(\Sigma,w)$. For a weighted signed graph $(\Sigma,w)$, $w(\Sigma)$ is the product of all the weights given to the edges of $\Sigma$. We use the notation $u\sim v$ when the vertices $u$ and $v$ are adjacent and similar notation for the incidence of an edge on a vertex.

\begin{defn}
	\rm{Let $(\Sigma,w)$ be a weighted signed graph. Its adjacency matrix $A(\Sigma, w)=(a_{ij})_n$ is defined as the square matrix of order $n=|V(G)|$ where
		
		$a_{ij} =
		\left\{
		\begin{array}{ll}
		\sigma(v_iv_j)w(v_iv_j)  & \mbox{if } v_i\sim v_j \\
		0 & \mbox{otherwise. }
		\end{array}
		\right.$ }
\end{defn}

\begin{defn}
	\rm{For a weighted signed graph $(\Sigma,w)$, its weighted laplacian matrix is defined as $L(\Sigma,w)=D(\Sigma,w) - A(\Sigma,w)$ where the diagonal matrix $D(\Sigma,w)$ is $\diag\big(\sum_{e:v_i\sim e}w(e)\big)$.}
	The matrix $D(\Sigma,w)$ is the weighted degree matrix of $(\Sigma,w).$
\end{defn}
To discuss oriented incidence matrix, we orient the edges (in an arbitrarily but fixed way) of the weighted signed graph.
For an oriented edge $\vec{e}_j=\overrightarrow{v_iv_k}$ we take $v_i$ as the tail of that edge and $v_k$ as its head and we write $t(\vec{e}_j)=v_i$ and $h(\vec{e}_j)=v_k$. We choose weights $w$ from the set of positive real numbers and take $\sqrt{w}$ as the positive square root of $w$.
\begin{defn}
	Given a weighted signed graph $(\Sigma, w)$, its (oriented) weighted incidence matrix is defined as $\mathrm{H}(\Sigma,w)=(\eta_{v_i{e}_j})$ where
	$$\eta_{v_i{e}_j} =
	\begin{cases}
	\sigma(e_j)\sqrt{w(e_j) }& \mbox{if } t(\vec{e}_j)=v_i ,\\
	-\sqrt{w(e_j) }& \mbox{if } h(\vec{e}_j)=v_i ,\\
	0 & \mbox{otherwise. }
	\end{cases}
	$$
\end{defn}

Let $\mathrm{H}^T(\Sigma,w) = (\eta'_{{e}_{i}v_j})$ be the transpose of the weighted incidence matrix $\mathrm{H}(\Sigma,w)$.  Thus, $\eta'_{{e}_{i}v_j} = \eta_{v_j{e}_{i}}$.

\begin{thm}
	For a weighted signed graph $(\Sigma,w)$, $L(\Sigma,w) =\mathrm{H}(\Sigma,w)\mathrm{H}^T(\Sigma,w).$
\end{thm}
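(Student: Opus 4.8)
The plan is to prove the identity entrywise, computing $\big(\mathrm{H}(\Sigma,w)\mathrm{H}^T(\Sigma,w)\big)_{v_iv_j}$ from the definitions and matching it with $\big(D(\Sigma,w)-A(\Sigma,w)\big)_{v_iv_j}$. Using $\eta'_{e_kv_j}=\eta_{v_je_k}$, the definition of matrix product gives
\[
\big(\mathrm{H}(\Sigma,w)\mathrm{H}^T(\Sigma,w)\big)_{v_iv_j}=\sum_{e_k\in E(\Sigma)}\eta_{v_ie_k}\,\eta_{v_je_k},
\]
and a term is nonzero only when the edge $e_k$ is incident with both $v_i$ and $v_j$. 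I would then split into the diagonal case $i=j$ and the off-diagonal case $i\neq j$.

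For $i=j$, each edge $e_k$ incident with $v_i$ contributes $\eta_{v_ie_k}^2$, which is $\big(\sigma(e_k)\sqrt{w(e_k)}\big)^2=w(e_k)$ when $v_i$ is the tail of $\vec e_k$ and $\big(-\sqrt{w(e_k)}\big)^2=w(e_k)$ when $v_i$ is the head; in either case the contribution is $w(e_k)$. Hence the $(i,i)$ entry equals $\sum_{e_k:\,v_i\sim e_k}w(e_k)$, which is exactly the $i$-th diagonal entry of $D(\Sigma,w)$; since $G$ is simple, $A(\Sigma,w)$ has zero diagonal, so this agrees with $L(\Sigma,w)_{v_iv_i}$.

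For $i\neq j$, simplicity of $G$ means there is at most one edge joining $v_i$ and $v_j$. If there is none, the sum is empty and equals $0=-a_{ij}$. If $e_k=v_iv_j$, then for whichever orientation of $\vec e_k$ one of $v_i,v_j$ is the tail and the other the head, so $\eta_{v_ie_k}\eta_{v_je_k}=\sigma(e_k)\sqrt{w(e_k)}\cdot\big(-\sqrt{w(e_k)}\big)=-\sigma(e_k)w(e_k)=-a_{ij}=L(\Sigma,w)_{v_iv_j}$. The value does not depend on the chosen orientation, which also confirms that the factorization is independent of the orientation used to define $\mathrm{H}(\Sigma,w)$. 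Combining the two cases yields $\mathrm{H}(\Sigma,w)\mathrm{H}^T(\Sigma,w)=D(\Sigma,w)-A(\Sigma,w)=L(\Sigma,w)$.

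There is no substantive obstacle here: the proof is a routine bookkeeping argument. The only points requiring a little care are that squaring a $\pm1$ signature kills it (so heads and tails contribute equally to the diagonal), that simplicity forbids multiple edges between two vertices in the off-diagonal computation, and that the orientation cancels out of the off-diagonal product. I would present it compactly as the two-case entry comparison above.
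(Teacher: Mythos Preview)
Your proof is correct and follows essentially the same approach as the paper: an entrywise computation of $\mathrm{H}\mathrm{H}^T$ split into the diagonal case (each incident edge contributes $w(e_k)$) and the off-diagonal case (the unique edge contributes $-\sigma(e_k)w(e_k)$). Your write-up is slightly more explicit about the tail/head subcases and orientation independence, but the argument is the same.
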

\begin{proof}
	
	Let $v_1,v_2, \dots , v_n$ and $\vec{e}_1,\vec{e}_{2}, \dots ,\vec{e}_{m}$ be the vertices and edges in $\Sigma$, respectively.  The $(i,j)^{th}$ entry of $\mathrm{H}\mathrm{H}^T$ is $ \sum_{k=1}^{m}\eta_{v_i{e}_{k}}\eta'_{{e}_{k}v_j}$.

	For $i=j$, $\eta_{v_i{e}_{k}}\eta'_{{e}_{k}v_j} = \eta_{v_i{e}_{k}}^2 \neq 0$ if and only if ${e}_{k}$ is incident to $v_i$.  Then $\eta_{v_i{e}_{k}}\eta'_{{e}_{k}v_j} = (\pm\sqrt{w(e_k)})^2 = w(e_k)$. Thus, the diagonal entry in $\mathrm{H}\mathrm{H}^T$ is $\sum_{{e}:v_i\sim{e}}w(e).$
	
	For $i \neq j$, $\eta_{v_ie_{k}}\eta'_{{e}_{k}v_j}\neq 0$ if and only if ${e}_{k}$ is an edge joining $v_i$ and $v_j$.  Then $\eta_{v_ie_{k}}\eta'_{{e}_{k}v_j} = -\sigma(e_k)w(e_k)$.
	
	In both cases, the $(i,j)^{th}$ entry of $\mathrm{H}\mathrm{H}^T$ coincides with the $(i,j)^{th}$ entry of $L(\Sigma,w)$, hence the proof.
\end{proof}

\begin{lem}\label{tree}
	For a weighted signed tree $(\Sigma,w)$, $\det L(\Sigma,w)= 0.$
\end{lem}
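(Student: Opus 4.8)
The plan is to deduce the lemma directly from the factorization $L(\Sigma,w)=\mathrm{H}(\Sigma,w)\mathrm{H}^T(\Sigma,w)$ established in the preceding theorem, by a simple dimension count. A tree on $n$ vertices has exactly $m=n-1$ edges, so the (oriented) weighted incidence matrix $\mathrm{H}(\Sigma,w)$ is an $n\times(n-1)$ matrix, while $L(\Sigma,w)$ itself is $n\times n$.

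The key steps, in order, are: first, observe that $\operatorname{rank}\mathrm{H}(\Sigma,w)\le n-1$ since the matrix has only $n-1$ columns; second, note $\operatorname{rank}\big(\mathrm{H}\mathrm{H}^T\big)\le\operatorname{rank}\mathrm{H}$, because each column of $\mathrm{H}\mathrm{H}^T$ is a linear combination of the columns of $\mathrm{H}$; third, combine these to get $\operatorname{rank}L(\Sigma,w)\le n-1<n$, so the $n\times n$ matrix $L(\Sigma,w)$ is singular and $\det L(\Sigma,w)=0$. None of these steps requires any computation with the actual entries.

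The only point that needs a little care — and the closest thing to an obstacle — is making sure the rank comparison is legitimate; this is where one uses that all weights are positive reals and all signs are $\pm 1$, so $\mathrm{H}(\Sigma,w)$ is a real matrix and in fact $\operatorname{rank}(\mathrm{H}\mathrm{H}^T)=\operatorname{rank}\mathrm{H}$ (from $\mathrm{H}\mathrm{H}^Tx=0\Rightarrow x^T\mathrm{H}\mathrm{H}^Tx=\|\mathrm{H}^Tx\|^2=0\Rightarrow\mathrm{H}^Tx=0$), though the crude inequality $\operatorname{rank}(\mathrm{H}\mathrm{H}^T)\le\operatorname{rank}\mathrm{H}$ already suffices here.

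An alternative route, if one prefers to avoid the incidence factorization, is to use that every signed tree is balanced: switching to $\sigma\equiv +1$ turns $L(\Sigma,w)$ into the ordinary weighted Laplacian of a tree, whose row sums vanish, so the all-ones vector is in its kernel; switching back only rescales this kernel vector by a $\pm 1$ diagonal matrix, so $L(\Sigma,w)$ still has nontrivial kernel. I would keep the incidence-matrix argument as the primary proof, since it is shorter and reuses the theorem just proved.
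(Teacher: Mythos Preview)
Your proof is correct and follows essentially the same approach as the paper: the paper's proof is a terse version of your primary argument, using that a tree on $n$ vertices has $n-1$ edges so $\mathrm{H}(\Sigma,w)$ is $n\times(n-1)$, hence $L(\Sigma,w)=\mathrm{H}\mathrm{H}^T$ has rank less than $n$ and determinant zero. Your additional justification of the rank inequality and the alternative switching argument are not in the paper but are sound elaborations.
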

\begin{proof}
	A tree on $n$ vertices has $n-1$ edges. Thus, $\mathrm{H}(\Sigma,w)$ is a matrix of order $n \times (n-1)$ and hence $L(\Sigma,w) = \mathrm{H}(\Sigma,w)\mathrm{H}^T(\Sigma,w)$ has rank less than $n$. This implies $\det L(\Sigma,w)= 0.$
\end{proof}

\begin{lem}\label{cycle}
	Let $(\Sigma,w)$ be a weighted signed graph where the underlying graph is a cycle $C_n$ of order $n$. Then $\det L(\Sigma,w)= 2w(C_n)(1-\sigma(C_n))$.
\end{lem}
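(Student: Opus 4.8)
The plan is to combine the factorization $L(\Sigma,w)=\mathrm{H}(\Sigma,w)\mathrm{H}^{T}(\Sigma,w)$ from the theorem above with the observation that for a cycle the incidence matrix is \emph{square}: since $C_n$ has $n$ vertices and $n$ edges, $\mathrm{H}:=\mathrm{H}(\Sigma,w)$ is an $n\times n$ matrix, whence $\det L(\Sigma,w)=\det\bigl(\mathrm{H}\mathrm{H}^{T}\bigr)=(\det\mathrm{H})^{2}$. Moreover $L(\Sigma,w)=D(\Sigma,w)-A(\Sigma,w)$ is independent of the chosen orientation, so we may compute $\det\mathrm{H}$ using whatever orientation is most convenient, and the whole problem reduces to evaluating one $n\times n$ determinant.

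Concretely I would write $C_n=v_1v_2\cdots v_nv_1$ with $e_i=v_iv_{i+1}$ (indices mod $n$), orient each edge as $\vec{e}_i=\overrightarrow{v_iv_{i+1}}$, and set $s_i=\sqrt{w(e_i)}$. Then column $i$ of $\mathrm{H}$ has exactly two nonzero entries, $\mathrm{H}_{i,i}=\sigma(e_i)s_i$ (the tail) and $\mathrm{H}_{i+1,i}=-s_i$ (the head, with the wrap-around entry $\mathrm{H}_{1,n}=-s_n$), so $\mathrm{H}$ is a cyclic bidiagonal matrix. Expanding $\det\mathrm{H}$ by the Leibniz formula, a permutation $\pi$ contributes only when $\pi(i)\in\{i,\,i+1\bmod n\}$ for every $i$; a one-line check (each index must make the same choice as its neighbour) shows the only such permutations are the identity and the $n$-cycle $i\mapsto i+1$. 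The identity contributes $\prod_{i=1}^{n}\sigma(e_i)s_i=\sigma(C_n)\sqrt{w(C_n)}$, and the $n$-cycle, of sign $(-1)^{n-1}$, contributes $(-1)^{n-1}\prod_{i=1}^{n}(-s_i)=(-1)^{n-1}(-1)^{n}\sqrt{w(C_n)}=-\sqrt{w(C_n)}$. Hence $\det\mathrm{H}=\bigl(\sigma(C_n)-1\bigr)\sqrt{w(C_n)}$.

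It then remains only to square: since $\sigma(C_n)\in\{-1,1\}$ we have $\sigma(C_n)^{2}=1$, so $(\det\mathrm{H})^{2}=\bigl(\sigma(C_n)-1\bigr)^{2}w(C_n)=\bigl(2-2\sigma(C_n)\bigr)w(C_n)=2w(C_n)\bigl(1-\sigma(C_n)\bigr)$, which is the asserted value of $\det L(\Sigma,w)$. The only part requiring care is the sign bookkeeping in the Leibniz expansion — pinning down the sign $(-1)^{n-1}$ of the $n$-cycle and the factor $(-1)^{n}$ coming from $\prod(-s_i)$ — but once the two contributing permutations have been identified this is entirely mechanical. (Alternatively one could expand $\det\mathrm{H}$ along the first column to get a short recursion in $n$, but the permutation count sidesteps any induction.)
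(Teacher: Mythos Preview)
Your proof is correct and follows essentially the same approach as the paper: both use $L=\mathrm{H}\mathrm{H}^{T}$ with $\mathrm{H}$ square, compute $\det\mathrm{H}=(\sigma(C_n)-1)\sqrt{w(C_n)}$ directly from the cyclic bidiagonal structure, and then square. The only cosmetic difference is that the paper evaluates $\det\mathrm{H}$ by cofactor expansion along the first row (yielding two triangular minors) while you use the Leibniz formula (yielding two contributing permutations); these are two descriptions of the same two terms.
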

\begin{proof}
	Let the cycle be $C_n = v_1\vec{e}_{1}v_2\vec{e}_{2}v_3\vec{e}_{3} \cdots v_{n-1}\vec{e}_{n-1}v_n\vec{e}_{n}v_1$.  The weighted incidence matrix $\mathrm{H}(\Sigma,w)$ is
	$$ \begin{pmatrix}
	\sigma(e_1)\sqrt{w(e_1)}& 0                   & \dots  & 0    & -\sqrt{w(e_n)}\\
	-\sqrt{w(e_1)}          & \sigma(e_2)\sqrt{w(e_2)} & \dots  &  0   & 0\\
	\vdots              &\vdots 				 & \ddots  &\vdots &\vdots\\
	0 				   & 0					 & \dots  & \sigma(e_{n-1})\sqrt{w(e_{n-1})} & 0 \\
	0                  & 0                   & \dots  & -\sqrt{w(e_{n-1})} & \sigma(e_n)\sqrt{w(e_n)}
	\end{pmatrix}.$$
Expanding along the first row to find the determinant we get
	$$ \det \mathrm{H}(\Sigma,w) = \sigma(e_1)\sqrt{w(e_1)}M_{1,1} + (-1)^n\sqrt{w(e_n)} M_{1,n}$$ 
where
	$$ M_{1,1} = \det \begin{pmatrix}
	\sigma(e_2)\sqrt{w(e_2)}  & \dots  &  0            & 0\\
	\vdots 				 & \ddots  &\vdots          &\vdots\\
    0			         & \dots  & \sigma(e_{n-1})\sqrt{w(e_{n-1})} & 0 \\
	 0                   & \dots  & -\sqrt{w(e_{n-1})} & \sigma(e_n)\sqrt{w(e_n)}
	\end{pmatrix}, $$
	
	$$ M_{1,n} = \det \begin{pmatrix}
	-\sqrt{w(e_1)}  &\sigma(e_2)\sqrt{w(e_2)} & \dots  &  0 \\
	\vdots  	   & \ddots  &\vdots   &\vdots\\
	0		   & 0      & \dots  & \sigma(e_{n-1})\sqrt{w(e_{n-1})} \\
	0          & 0      & \dots  & -\sqrt{w(e_{n-1})}
	\end{pmatrix}. $$
Since $M_{1,1}$ and $M_{1,n}$ are determinants of triangular matrices, 
	\begin{align*}  \det \mathrm{H}(\Sigma,w) &= \sigma(e_1)\sqrt{w(e_1)}\sigma(e_2)\sqrt{w(e_2)} \cdots \sigma(e_n)\sqrt{w(e_n)} \\
	& \qquad + (-1)^n\sqrt{w(e_n)}(-\sqrt{w(e_1)}) \cdots (-\sqrt{w(e_{n-1})})\\
%	&= \sigma(C_n)\sqrt{w(e_1)} \cdots \sqrt{w(e_n)} - \sqrt{w(e_1)} \cdots \sqrt{w(e_n)}\\
	& = (\sigma(C_n) - 1)\sqrt{w(C_n)}.
	\end{align*}
Now, $\det \mathrm{H}^T(\Sigma,w) = \det \mathrm{H}(\Sigma,w),$ hence, 
	\begin{align*}
	\det L(\Sigma,w) & = (\sigma(C_n) - 1)\sqrt{w(C_n)}.(\sigma(C_n) - 1)\sqrt{w(C_n)}\\
	                  & = 2w(C_n)(1-\sigma(C_n)).
	\qedhere
	\end{align*}
\end{proof}

\begin{lem}\label{unicycle}
	Let $(\Sigma,w)$ be a signed graph, where the underlying graph is a unicyclic graph of order $n$ with unique cycle $C$. Then $\det L(\Sigma,w) = 2w(\Sigma)(1-\sigma(C)).$
\end{lem}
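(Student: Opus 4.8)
The plan is to reduce the problem to a square-matrix determinant and then evaluate that determinant by stripping off leaves. Since a unicyclic graph of order $n$ has exactly $n$ edges, $\mathrm{H}(\Sigma,w)$ is an $n\times n$ matrix; combining the theorem above with $\det \mathrm{H}^T(\Sigma,w)=\det \mathrm{H}(\Sigma,w)$ gives $\det L(\Sigma,w)=\big(\det \mathrm{H}(\Sigma,w)\big)^2$. Hence it suffices to show that $\det \mathrm{H}(\Sigma,w)=\pm(\sigma(C)-1)\sqrt{w(\Sigma)}$, and since the value will be squared at the end, I will not keep track of the sign.

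I would prove this claim by induction on the number of edges of $\Sigma$ not lying on $C$. If that number is $0$, then $\Sigma$ is the cycle $C$ itself, $w(\Sigma)=w(C)$, and the computation already carried out in the proof of Lemma \ref{cycle} gives precisely $\det \mathrm{H}(\Sigma,w)=(\sigma(C)-1)\sqrt{w(C)}$. For the inductive step, assume $\Sigma$ is not a cycle. Then $\Sigma$ has a vertex $v$ of degree $1$, and such a vertex cannot lie on $C$; let $e$ be the unique edge incident with $v$. In $\mathrm{H}(\Sigma,w)$ the row indexed by $v$ has a single nonzero entry, equal to $\pm\sqrt{w(e)}$, in the column indexed by $e$. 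Expanding the determinant along that row yields $\det \mathrm{H}(\Sigma,w)=\pm\sqrt{w(e)}\,\det \mathrm{H}(\Sigma-v,w)$, where $\Sigma-v$ denotes the graph obtained by deleting $v$ and $e$. Now $\Sigma-v$ is still connected, has $n-1$ vertices and $n-1$ edges, and still contains $C$ as its unique cycle, so by the inductive hypothesis $\det \mathrm{H}(\Sigma-v,w)=\pm(\sigma(C)-1)\sqrt{w(\Sigma-v)}$; since $w(\Sigma)=w(e)\,w(\Sigma-v)$, this completes the induction.

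Finally, squaring gives $\det L(\Sigma,w)=(\sigma(C)-1)^2\,w(\Sigma)$, and because $\sigma(C)\in\{1,-1\}$ we have $(\sigma(C)-1)^2=2(1-\sigma(C))$ (both sides equal $0$ when $\sigma(C)=1$ and $4$ when $\sigma(C)=-1$), so $\det L(\Sigma,w)=2w(\Sigma)(1-\sigma(C))$, as claimed. The only step that needs genuine, if routine, care is the structural observation that a unicyclic graph other than a cycle has a degree-$1$ vertex and that deleting such a vertex leaves a unicyclic graph with the same cycle; everything else is a cofactor expansion together with the remark that the sign of $\det \mathrm{H}$ is immaterial once squared, so I anticipate no real obstacle.
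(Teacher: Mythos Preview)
Your proof is correct and follows essentially the same strategy as the paper: both observe that $\mathrm{H}(\Sigma,w)$ is square, reduce $\det L$ to $(\det \mathrm{H})^2$, and then peel the pendant part of the unicyclic graph down to the cycle so that Lemma~\ref{cycle} applies. The only cosmetic difference is that you remove leaves one at a time by induction and cofactor expansion, whereas the paper orders the vertices outward from $C$ to exhibit $\mathrm{H}$ as a block upper-triangular matrix with $\mathrm{H}(C,w)$ in the top corner and computes the determinant in one shot.
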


\begin{proof}
	Define the orientation of edges so that for $i<j$ the edge $\vec{e}_{i,j}$ has tail $v_i$ and head $v_j$. Let $C = v_1\vec{e}_{1}v_2\vec{e}_{2} \cdots v_p\vec{e}_{p}v_1$ be the unique cycle, and label the vertices so that each edge $\vec{e}_{ij}$ not in $C$ has $v_i$ nearer to $C$ than $v_j$; in other words, the vertex labels increase when moving away from $C$.  
	Then the incidence matrix $\mathrm{H}(\Sigma,w)$ has the following form:
	$$\left(\begin{array}{cc|c}
	\mathrm{H}(C,w)	&&{*}	\\
	\hline
	O		&&\begin{matrix}-\sqrt{w(e_{p+1}}) &*&\dots	&*&*\\
			0	&-\sqrt{w(e_{p+2}})	&\dots	&*	&*\\
			0	&0		&\dots	&*	&*\\
			\vdots	&\vdots		&\ddots	&\vdots	&\vdots\\
			0	&0		&\dots&0	&-\sqrt{w(e_{n}})
			\end{matrix}
	\end{array}\right),$$
which is an upper-triangular block matrix whose first diagonal block is $\mathrm{H}(C,w)$ and whose other diagonal elements correspond to the heads of edges not in $C$. 
	Hence,
	\begin{align*}
	  \det \mathrm{H}(\Sigma,w) &= \det \mathrm{H}(C,w) \cdot \prod_{k=p+1}^{n}(-\sqrt{w(e)})\\
				  &= (\sigma(C) - 1) (-1)^{n-p}\sqrt{w(\Sigma)}.
	\end{align*}	
	Thus,
	 \begin{align*}
	       \det L(\Sigma,w) & = \det \mathrm{H}(\Sigma,w)\det \mathrm{H}^T(\Sigma,w)\\
%	       			& = 2w(C)(1-\sigma(C))\prod_{e \notin C}(w(e))\\
	       			 &= 2w(\Sigma)(1-\sigma(C)).
	 \qedhere      					
	 \end{align*}				
\end{proof}

\begin{lem}\label{forest}
	Let $(\Sigma,w)$ be a signed graph whose underlying graph is a $1$-forest.  Then $\det L(\Sigma,w)=  w(\Sigma) \prod_{\psi} 2(1-\sigma(C_\psi))$, where the product runs over all component $1$-trees $\psi$ having unique cycle $C_\psi.$
\end{lem}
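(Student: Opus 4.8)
The plan is to reduce everything to the unicyclic case already settled in Lemma~\ref{unicycle}, by exploiting the block structure coming from the connected components of $\Sigma$. Write the underlying $1$-forest as the disjoint union of its component $1$-trees $\psi_1,\dots,\psi_r$, and let $\Sigma_i$ be the weighted signed subgraph carried by $\psi_i$, with unique cycle $C_{\psi_i}$. Since there are no edges between distinct components, after reordering the vertices (and edges) componentwise the weighted degree matrix $D(\Sigma,w)$ and the weighted adjacency matrix $A(\Sigma,w)$ are both block-diagonal with blocks indexed by the components, and hence so is $L(\Sigma,w)=D(\Sigma,w)-A(\Sigma,w)$, with diagonal blocks $L(\Sigma_1,w),\dots,L(\Sigma_r,w)$.

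First I would record that block-diagonality gives $\det L(\Sigma,w)=\prod_{i=1}^{r}\det L(\Sigma_i,w)$. Next, each $\Sigma_i$ is a weighted signed graph whose underlying graph is unicyclic, so Lemma~\ref{unicycle} applies and yields $\det L(\Sigma_i,w)=2\,w(\Sigma_i)\bigl(1-\sigma(C_{\psi_i})\bigr)$, where $w(\Sigma_i)$ denotes the product of the weights of the edges of $\Sigma_i$. Finally I would assemble the pieces: every edge of $\Sigma$ lies in exactly one component, so $\prod_{i=1}^{r}w(\Sigma_i)=w(\Sigma)$, and therefore
\[
\det L(\Sigma,w)=\prod_{i=1}^{r}2\,w(\Sigma_i)\bigl(1-\sigma(C_{\psi_i})\bigr)=w(\Sigma)\prod_{\psi}2\bigl(1-\sigma(C_\psi)\bigr),
\]
which is the claimed formula.

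There is essentially no difficult step; the only thing requiring care is the bookkeeping of the component decomposition, namely verifying that the simultaneous vertex/edge reordering genuinely puts $L(\Sigma,w)$ into block-diagonal form with the component Laplacians as its blocks, and that the edge-weight products multiply correctly across components so that $\prod_i w(\Sigma_i)=w(\Sigma)$. If one preferred not to invoke Lemma~\ref{unicycle} as a black box, one could instead observe that $\mathrm{H}(\Sigma,w)$ is itself block-diagonal with blocks $\mathrm{H}(\Sigma_i,w)$, whence $\det\mathrm{H}(\Sigma,w)=\prod_i\det\mathrm{H}(\Sigma_i,w)$, and then square, using $(\sigma(C)-1)^2=2\bigl(1-\sigma(C)\bigr)$ for $\sigma(C)\in\{1,-1\}$; but routing the argument through Lemma~\ref{unicycle} is the cleaner option.
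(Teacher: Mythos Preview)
Your proof is correct and follows essentially the same approach as the paper: block-diagonalize $L(\Sigma,w)$ by grouping vertices by component, take the product of the component Laplacian determinants, and apply Lemma~\ref{unicycle} to each $1$-tree component. The paper's argument is a terser version of exactly what you wrote.
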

\begin{proof}
	By suitable reordering of vertices and edges we can make the laplacian $L(\Sigma,w)$ into a block diagonal matrix, where the blocks correspond to the components of the $1$-forest;.  Thus, $\det L(\Sigma,w)$ is the product of the laplacian determinants of the components. The components are $1$-trees. By using Lemma \ref{unicycle} we get the expression for the determinant.
\end{proof}

A signed graph is said to be contrabalanced if it contains no positive cycles \cite{tz}.
\begin{lem}\label{lemma}
	Let $L(\Sigma,w)$ be a weighted signed graph having $n$ vertices and $\Psi$ be a spanning subgraph of $(\Sigma,w)$ having exactly $n$ edges. Then, $\det{L(\Psi,w)} \neq 0$ if and only if $\Psi$ is a contrabalanced $1$-forest.
\end{lem}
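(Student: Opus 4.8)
The plan is to use the factorization $L(\Psi,w) = \mathrm{H}(\Psi,w)\mathrm{H}^T(\Psi,w)$ and the identity $\det L(\Psi,w) = \big(\det\mathrm{H}(\Psi,w)\big)^2$, which holds because $\Psi$ has exactly $n$ vertices and $n$ edges, so $\mathrm{H}(\Psi,w)$ is square. Thus $\det L(\Psi,w)\neq 0$ if and only if $\mathrm{H}(\Psi,w)$ is nonsingular. The strategy is therefore to determine exactly when the square weighted incidence matrix of an $n$-vertex, $n$-edge signed graph is nonsingular, and to show this happens precisely when $\Psi$ is a contrabalanced $1$-forest.

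First I would handle the easy implications using the lemmas already established. If $\Psi$ is not a $1$-forest, then since it has $n$ vertices and $n$ edges, some component has more edges than vertices (hence contains two independent cycles or a theta subgraph) while another component is a tree; in either case I can argue $\det L(\Psi,w)=0$: the tree component contributes a zero determinant block by Lemma~\ref{tree} (after the block-diagonalization in the style of Lemma~\ref{forest}), or more directly $\mathrm{H}(\Psi,w)$ has a nontrivial left null vector coming from that tree component. If $\Psi$ is a $1$-forest but not contrabalanced, then some component $1$-tree has a positive cycle $C_\psi$, so $\sigma(C_\psi)=1$ and the factor $2(1-\sigma(C_\psi))$ in Lemma~\ref{forest} vanishes, giving $\det L(\Psi,w)=0$.

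For the converse — $\Psi$ a contrabalanced $1$-forest implies $\det L(\Psi,w)\neq 0$ — I would again invoke Lemma~\ref{forest}: $\det L(\Psi,w) = w(\Psi)\prod_\psi 2(1-\sigma(C_\psi))$. Since the weights are positive, $w(\Psi)>0$; and since $\Psi$ is contrabalanced, every cycle $C_\psi$ is negative, so $\sigma(C_\psi)=-1$ and each factor equals $2(1-(-1))=4\neq 0$. Hence the product is nonzero and $\det L(\Psi,w)\neq 0$.

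The main obstacle is the "not a $1$-forest" case, where Lemma~\ref{forest} does not directly apply. I expect to dispatch it by the counting observation: a spanning subgraph with $n$ vertices and $n$ edges that is not a $1$-forest must have a component that is a tree (a component with cyclomatic number $0$), since the total cyclomatic number is $n - n + c = c$ where $c$ is the number of components, but a non-$1$-forest has some component with cyclomatic number $\geq 2$, forcing another component to have cyclomatic number $0$. That tree component makes the corresponding diagonal block of $L(\Psi,w)$ singular by Lemma~\ref{tree}, so $\det L(\Psi,w)=0$. Assembling these cases, $\det L(\Psi,w)\neq 0$ exactly when $\Psi$ is a contrabalanced $1$-forest.
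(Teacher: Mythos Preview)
Your proposal is correct and follows essentially the same route as the paper: use Lemma~\ref{tree} (via a tree component) to kill the non-$1$-forest case, then invoke Lemma~\ref{forest} to see that for a $1$-forest the determinant vanishes exactly when some cycle is positive. Your counting argument for why a non-$1$-forest with $n$ vertices and $n$ edges must have a tree component is a welcome elaboration of what the paper states in one line; the preliminary remarks about $\det L = (\det\mathrm{H})^2$ are fine but ultimately unused, since you end up working with $L$ directly through the earlier lemmas just as the paper does.
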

\begin{proof}
	A spanning subgraph $\Psi$ of $n$ edges that is not a $1$-forest must have a component that has fewer edges than vertices, that is, it is a tree $T$, and $\det L(T,w) = 0$ by Lemma \ref{tree}.  Since $\det{L(\Psi,w)}$ is the product of the laplacians of the components of $(\Psi,w)$, $\det{L(\Psi,w)} = 0$ if $\Psi$ is not a $1$-forest.
	
	Now, assuming $(\Psi,w)$ is a $1$-forest, by Lemma \ref{forest}, $\det{L(\Psi,w)} \neq 0$ if and only if $\Psi$ contains no positive cycles, which means that $\Psi$ is a contrabalanced $1$-forest.
\end{proof}

Let $c(\Psi)$ denote the number of components of a signed graph $\Psi$.

\begin{thm}\label{gen}
	Let $(\Sigma,w)$ be a signed graph; then 
	$$\det L(\Sigma,w)=  \sum_{\Psi} 4^{c(\Psi)}w(\Psi),$$  
where the summation runs over all contrabalanced spanning $1$-forests $\Psi$ of $G$.
\end{thm}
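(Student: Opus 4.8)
The plan is to combine the factorization $L(\Sigma,w)=\mathrm{H}(\Sigma,w)\mathrm{H}^T(\Sigma,w)$ from the theorem above with the Cauchy--Binet formula, and then to evaluate the resulting sum term by term using the lemmas already established. Write $\mathrm{H}=\mathrm{H}(\Sigma,w)$, an $n\times m$ matrix where $m=|E(G)|$. If $m<n$, then since $G$ is connected it is a tree, there are no spanning subgraphs with exactly $n$ edges, and $\det L(\Sigma,w)=0$ by Lemma \ref{tree}, so the identity holds as an empty sum; hence assume $m\ge n$.

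First I would apply the Cauchy--Binet formula to $\det L(\Sigma,w)=\det(\mathrm{H}\mathrm{H}^T)$, obtaining
$$\det L(\Sigma,w)=\sum_{S}\det\big(\mathrm{H}[S]\big)\det\big(\mathrm{H}[S]^T\big)=\sum_{S}\big(\det \mathrm{H}[S]\big)^2,$$
where $S$ ranges over all $n$-element subsets of the edge set and $\mathrm{H}[S]$ is the $n\times n$ submatrix of $\mathrm{H}$ consisting of the columns indexed by $S$. The key observation is that $\mathrm{H}[S]$ is precisely the weighted incidence matrix $\mathrm{H}(\Psi_S,w)$ of the spanning subgraph $\Psi_S$ of $\Sigma$ whose edge set is $S$ (with the orientation and signature inherited from $\Sigma$), because deleting columns of an incidence matrix corresponds exactly to deleting the associated edges while keeping all vertices. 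Consequently
$$\big(\det \mathrm{H}[S]\big)^2=\det\big(\mathrm{H}(\Psi_S,w)\,\mathrm{H}^T(\Psi_S,w)\big)=\det L(\Psi_S,w),$$
again by the theorem above applied to $(\Psi_S,w)$.

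Next I would discard the vanishing terms. By Lemma \ref{lemma}, $\det L(\Psi_S,w)\neq 0$ if and only if $\Psi_S$ is a contrabalanced $1$-forest, so only such subgraphs contribute to the sum. For each contrabalanced spanning $1$-forest $\Psi$, Lemma \ref{forest} gives $\det L(\Psi,w)=w(\Psi)\prod_{\psi}2(1-\sigma(C_\psi))$, the product running over the component $1$-trees $\psi$ of $\Psi$ with unique cycles $C_\psi$. Since $\Psi$ is contrabalanced, every cycle $C_\psi$ is negative, so each factor equals $2(1-(-1))=4$, and the number of factors is the number of components $c(\Psi)$ (each component of a $1$-forest being a $1$-tree). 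Hence $\det L(\Psi,w)=4^{c(\Psi)}w(\Psi)$, and substituting this into the Cauchy--Binet sum yields
$$\det L(\Sigma,w)=\sum_{\Psi}4^{c(\Psi)}w(\Psi),$$
the sum over all contrabalanced spanning $1$-forests $\Psi$ of $G$, as desired.

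The main obstacle I anticipate is the middle step: justifying cleanly that the Cauchy--Binet minors of $\mathrm{H}$ are themselves squared incidence determinants of spanning subgraphs, so that $(\det \mathrm{H}[S])^2=\det L(\Psi_S,w)$ and Lemma \ref{lemma} becomes directly applicable. Everything after that is bookkeeping: identifying which subgraphs survive (contrabalanced $1$-forests, via Lemma \ref{lemma}) and plugging in the explicit determinant from Lemma \ref{forest} with $\sigma(C_\psi)=-1$ to turn each $2(1-\sigma(C_\psi))$ into a factor of $4$.
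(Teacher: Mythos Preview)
Your proposal is correct and follows essentially the same approach as the paper: apply Cauchy--Binet to $L=\mathrm{H}\mathrm{H}^T$, identify each $n\times n$ minor with $\det L(\Psi_S,w)$ for the corresponding spanning subgraph, invoke Lemma~\ref{lemma} to restrict to contrabalanced $1$-forests, and then use Lemma~\ref{forest} with $\sigma(C_\psi)=-1$ to obtain the factor $4^{c(\Psi)}$. Your write-up is in fact slightly more careful than the paper's, since you handle the degenerate case $m<n$ explicitly and spell out why the Cauchy--Binet submatrix $\mathrm{H}[S]$ coincides with the incidence matrix of the spanning subgraph $\Psi_S$.
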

\begin{proof}
	Since $L(\Sigma,w)=\mathrm{H}(\Sigma,w)\mathrm{H}^T(\Sigma,w)$, by the Binet--Cauchy theorem \cite{jgb} we get
	$$\det L(\Sigma,w) = \sum_J \det \mathrm{H}(J) \det\mathrm{H^T}(J) =  \sum_J\det{L(J,w)},$$
	where $J$ is a spanning subgraph of $G$ with exactly $n$ edges.
	
	Thus by Lemma \ref{lemma}, $\det L(\Sigma,w) =  \sum_{\Psi}\det{L({\Psi})}$ where the summation runs over all contrabalanced spanning $1$-forests $\Psi$ of $G.$  Since every cycle $C_\psi$ in $\Psi$ is negative, the factor $1-\sigma(C_\psi) = 2$.  That gives the formula of the theorem.
\end{proof}

Recall that we assume the signed graph $\Sigma$ is connected.

\begin{thm}\label{detbal}
	A weighted signed graph $(\Sigma,w)$ is balanced if and only if the determinant of its weighted laplacian matrix is equal to $0.$
\end{thm}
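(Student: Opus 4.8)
The plan is to read the result straight off Theorem \ref{gen}. Since the weight function is positive, every summand $4^{c(\Psi)}w(\Psi)$ in the formula $\det L(\Sigma,w)=\sum_{\Psi}4^{c(\Psi)}w(\Psi)$ is strictly positive, so the sum vanishes if and only if the index set is empty, i.e.\ if and only if $G$ admits no contrabalanced spanning $1$-forest. Thus the theorem reduces to the purely combinatorial equivalence: $\Sigma$ is balanced $\iff$ $G$ has no spanning $1$-forest all of whose cycles are negative.

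For the forward direction, suppose $\Sigma$ is balanced. A spanning $1$-forest $\Psi$ decomposes into component $1$-trees, each of which is connected with as many edges as vertices and therefore contains exactly one cycle; in particular $\Psi$ contains at least one cycle $C$ of $G$. Balance forces $\sigma(C)=+1$, so $\Psi$ is not contrabalanced. Hence no contrabalanced spanning $1$-forest exists, and by Theorem \ref{gen}, $\det L(\Sigma,w)=0$.

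For the converse I would argue the contrapositive: assuming $\Sigma$ is unbalanced, I construct a contrabalanced spanning $1$-forest. Fix a negative cycle $C$, with $V(C)$ of size $p$. Since $G$ is connected, the contraction $G/C$ is connected; pick a spanning tree $T'$ of $G/C$ and let $F$ be the set of edges of $G$ lifting the $n-p$ edges of $T'$. No edge of $T'$ lies inside $V(C)$, so $F$ is edge-disjoint from $C$ and acyclic, and $\Psi:=F\cup C$ is connected, spans $G$, and has $n$ edges on $n$ vertices, hence is a $1$-tree with a unique cycle. That cycle must be $C$, since if it used an edge $e\in F$ then deleting $e$ would leave a spanning tree still containing the cycle $C$, a contradiction. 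So $\Psi$ is a spanning $1$-forest whose only cycle $C$ is negative; it is therefore contrabalanced and contributes to the sum in Theorem \ref{gen}, giving $\det L(\Sigma,w)\ge 4^{c(\Psi)}w(\Psi)>0$.

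The only genuine work is the construction in the converse: extending a single negative cycle to a spanning $1$-forest without creating any other cycle (which could be positive). Verifying that the lifted tree edges $F$ are acyclic and that $C$ remains the unique cycle of $\Psi$ is the crux; the rest is immediate from Theorem \ref{gen} together with positivity of the weights.
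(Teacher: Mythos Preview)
Your proof is correct and follows the same approach as the paper: reduce to Theorem~\ref{gen}, use positivity of the weights to conclude $\det L(\Sigma,w)=0$ iff $\Sigma$ has no contrabalanced spanning $1$-forest, and then argue this is equivalent to balance. In fact you supply more than the paper does---the paper simply asserts ``Since $\Sigma$ is connected, that implies it has no negative circle,'' whereas you explicitly construct the spanning $1$-tree extending a given negative cycle via a spanning tree of $G/C$.
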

\begin{proof}
	In the formula of Theorem \ref{gen} every term is positive, so the determinant is $0$ if and only if there is no contrabalanced $1$-forest in $\Sigma$.  Since $\Sigma$ is connected, that implies it has no negative circle, that is, it is balanced.
\end{proof}

We recall the characterization theorem for balance in signed graphs using signed distances proved by Hameed et al.\ in \cite{sh}.
\begin{thm}[\cite{sh}]\label{dist}
	For a signed graph $\Sigma$ the following statements are equivalent:
	\begin{enumerate}%[\rm{(I)}]
		\item [\rm{(i)}] $\Sigma$ is balanced.
		\item [\rm{(ii)}] The associated signed complete graph $K^{D^{\max}}(\Sigma)$ is balanced.
		\item [\rm{(iii)}] The associated signed complete graph $K^{D^{\min}}(\Sigma)$ is balanced.
		\item [\rm{(iv)}] $D^{\max}(\Sigma)=D^{\min}(\Sigma)$ and the associated signed complete graph $K^{D^{\pm}}(\Sigma)$ is balanced.
	\end{enumerate}	
\end{thm}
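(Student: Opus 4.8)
The plan is to prove all the equivalences from the standard switching characterization of balance: $\Sigma$ is balanced if and only if there is a function $\zeta\colon V\to\{+1,-1\}$ with $\zeta(u)\,\sigma(uv)\,\zeta(v)=+1$ for every edge $uv$, equivalently $\zeta(u)\,\sigma(P)\,\zeta(v)=+1$ for every $uv$-walk $P$ (since every closed walk is then a product of positive circles). I would organize the argument as the cycle $\mathrm{(i)}\Rightarrow\mathrm{(iv)}\Rightarrow\mathrm{(ii)}\Rightarrow\mathrm{(i)}$ together with the short cycle $\mathrm{(i)}\Rightarrow\mathrm{(iii)}\Rightarrow\mathrm{(i)}$; the treatments of $\mathrm{(ii)}$ and $\mathrm{(iii)}$ are mirror images, with $\min$ in place of $\max$.

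The substantive step is $\mathrm{(i)}\Rightarrow\mathrm{(iv)}$, which I would do first. Assume $\Sigma$ is balanced and fix a switching function $\zeta$ as above. For any two vertices $u,v$ and any shortest $uv$-path $P$ we get $\sigma(P)=\zeta(u)\zeta(v)$; since the right-hand side is independent of $P$, \emph{all} shortest $uv$-paths carry the common sign $\zeta(u)\zeta(v)$. Feeding this into (S1)--(S2) gives $\sigma_{\max}(u,v)=\sigma_{\min}(u,v)=\zeta(u)\zeta(v)$ for every pair, hence $D^{\max}(\Sigma)=D^{\min}(\Sigma)$. Now consider the common associated complete graph $K^{D^{\pm}}(\Sigma)$: an edge $uv$ that was already in $\Sigma$ has sign $\sigma(uv)$, which equals $\sigma_{\max}(u,v)$ because the edge is the unique shortest $uv$-path; a new edge has sign $\sigma_{\max}(u,v)$ by definition. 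So \emph{every} edge $uv$ of $K^{D^{\pm}}(\Sigma)$ has sign $\zeta(u)\zeta(v)$, and therefore $\zeta(u)\bigl(\zeta(u)\zeta(v)\bigr)\zeta(v)=+1$: the same $\zeta$ switches $K^{D^{\pm}}(\Sigma)$ to the all-positive signature, so it is balanced. This is exactly $\mathrm{(iv)}$, and it simultaneously yields $\mathrm{(i)}\Rightarrow\mathrm{(iii)}$ (same computation with $\sigma_{\min}$).

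The remaining implications are routine. $\mathrm{(iv)}\Rightarrow\mathrm{(ii)}$ is immediate: if $D^{\max}(\Sigma)=D^{\min}(\Sigma)$ then $K^{D^{\max}}(\Sigma)=K^{D^{\pm}}(\Sigma)$, which is assumed balanced. For $\mathrm{(ii)}\Rightarrow\mathrm{(i)}$ (and identically $\mathrm{(iii)}\Rightarrow\mathrm{(i)}$), note that $\Sigma$ is a spanning subgraph of $K^{D^{\max}}(\Sigma)$ with its original edge signs retained, and balance is hereditary — every circle of a subgraph is a circle of the ambient signed graph, hence positive — so balance of $K^{D^{\max}}(\Sigma)$ forces balance of $\Sigma$.

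I expect the one place that needs care to be the observation that the auxiliary signs $\sigma_{\max},\sigma_{\min}$ are \emph{not} individually covariant under switching: switching by $\zeta$ interchanges $\sigma_{\max}(u,v)$ and $\sigma_{\min}(u,v)$ on every pair with $\zeta(u)\zeta(v)=-1$, so one cannot prove $\mathrm{(i)}\Rightarrow\mathrm{(ii)}$ by simply asserting that $K^{D^{\max}}$ "switches along with $\Sigma$". Routing through $\mathrm{(iv)}$ sidesteps this: once balance is known to force all shortest $uv$-path signs to be equal, the $\max$/$\min$ distinction collapses and the switching argument is clean.
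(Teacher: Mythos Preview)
Your argument is correct. The switching characterization does exactly what you want: once $\zeta$ exists, every $uv$-path has sign $\zeta(u)\zeta(v)$, the $\max/\min$ distinction collapses, and the same $\zeta$ balances the associated complete graph; conversely, balance is inherited by the signed subgraph $\Sigma\subseteq K^{D^{\max}}(\Sigma)$ (and likewise for $\min$). Your cautionary remark about $\sigma_{\max}$ and $\sigma_{\min}$ not being individually switching-covariant is well taken and explains why routing through (iv) is the clean path.

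There is, however, nothing in this paper to compare against: Theorem~\ref{dist} is quoted from \cite{sh} and is not proved here---the authors merely recall it in order to deduce Theorem~\ref{main}. So your proposal is a self-contained proof of a result the present paper imports as a black box. If you want to match your write-up against an existing argument, you would need to consult \cite{sh} directly; the switching approach you give is the natural one and is almost certainly what appears there (or is equivalent to it).
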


Now we are ready to characterize balance in signed graphs using signed distance laplacian matrices.
\begin{thm}\label{main}
	The following properties of a signed graph $\Sigma$ are equivalent.
	\begin{enumerate}%[\rm{(I)}]
		\item [\rm{(i)}] $\Sigma$ is balanced.
		\item [\rm{(ii)}] The max-signed distance laplacian determinant $\det L^{\max}(\Sigma) = 0.$
		\item [\rm{(iii)}] The min-signed distance laplacian determinant $\det L^{\min}(\Sigma) = 0.$
		\item [\rm{(iv)}] $L^{\max}(\Sigma) = L^{\min}(\Sigma)$ and $\det L^{\pm}(\Sigma) = 0.$
	\end{enumerate}	
\end{thm}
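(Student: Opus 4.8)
The plan is to recognize each signed distance laplacian matrix as a genuine weighted laplacian matrix and then feed that recognition into the two characterization theorems already established. Concretely, I would equip the complete graph on $V(\Sigma)$ with the weight function $w(uv) = d(u,v)$; since $\Sigma$ is connected this is everywhere positive, so the weighted-laplacian machinery of Section~2 applies. The bookkeeping identity to check is that $D^{\max}(\Sigma)$ is precisely the weighted adjacency matrix $A(K^{D^{\max}}(\Sigma),w)$: its $(u,v)$ entry $\sigma_{\max}(u,v)\,d(u,v)$ matches the definition of $A$ on both the original edges (where $d=1$ and $\sigma_{\max}=\sigma$) and the added edges. Moreover the weighted degree of $v_i$ in $(K^{D^{\max}}(\Sigma),w)$ is $\sum_{u\ne v_i}d(v_i,u)=Tr(v_i)$, so the weighted degree matrix $D(K^{D^{\max}}(\Sigma),w)$ equals $Tr(G)$. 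Hence $L^{\max}(\Sigma)=Tr(G)-D^{\max}(\Sigma)=L(K^{D^{\max}}(\Sigma),w)$, and by the same reasoning $L^{\min}(\Sigma)=L(K^{D^{\min}}(\Sigma),w)$.

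With this identification, $(\mathrm{i})\Leftrightarrow(\mathrm{ii})$ follows by chaining two results: Theorem~\ref{detbal} applied to $(K^{D^{\max}}(\Sigma),w)$ gives $\det L^{\max}(\Sigma)=0$ if and only if $K^{D^{\max}}(\Sigma)$ is balanced, and the equivalence $(\mathrm{i})\Leftrightarrow(\mathrm{ii})$ of Theorem~\ref{dist} says this holds if and only if $\Sigma$ is balanced. The equivalence $(\mathrm{i})\Leftrightarrow(\mathrm{iii})$ is the same argument with ``$\max$'' replaced by ``$\min$'' throughout, using the clause $(\mathrm{i})\Leftrightarrow(\mathrm{iii})$ of Theorem~\ref{dist}. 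For $(\mathrm{i})\Leftrightarrow(\mathrm{iv})$ I would use Theorem~\ref{dist}$(\mathrm{iv})$: if $\Sigma$ is balanced then $D^{\max}(\Sigma)=D^{\min}(\Sigma)$, whence $L^{\max}(\Sigma)=L^{\min}(\Sigma)=L^{\pm}(\Sigma)$, and its determinant vanishes by $(\mathrm{ii})$; conversely $(\mathrm{iv})$ already contains the statement $\det L^{\pm}(\Sigma)=\det L^{\max}(\Sigma)=0$, which returns us to $(\mathrm{ii})$.

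I do not expect a real obstacle: all the content sits in the identity $L^{\max}(\Sigma)=L(K^{D^{\max}}(\Sigma),w)$ and in matching up the correct clauses of Theorems~\ref{detbal} and \ref{dist}. The one subtlety worth spelling out is that Theorem~\ref{detbal} is stated for \emph{weighted} signed graphs whereas Theorem~\ref{dist} speaks of the \emph{unweighted} complete graphs $K^{D^{\max}}(\Sigma)$ and $K^{D^{\min}}(\Sigma)$; bridging the two needs only the trivial observation that switching classes, and hence balance, are unaffected by the positive edge weights (here all $\ge 1$), so the conclusion of Theorem~\ref{detbal} transfers verbatim to the unweighted $K^{D^{\max}}(\Sigma)$ before Theorem~\ref{dist} is invoked.
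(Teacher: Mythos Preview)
Your proposal is correct and follows essentially the same route as the paper: identify $L^{\max}(\Sigma)$ (respectively $L^{\min}(\Sigma)$) with the weighted laplacian of the associated signed complete graph equipped with weight $w(uv)=d(u,v)$, then invoke Theorem~\ref{detbal} and Theorem~\ref{dist} in tandem. Your write-up is in fact a bit more careful than the paper's in verifying that the weighted degree matrix coincides with $Tr(G)$ and in noting that balance is insensitive to the positive weights, so Theorem~\ref{detbal} feeds cleanly into Theorem~\ref{dist}.
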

\begin{proof}
	Corresponding to the associated signed complete graph $K^{D^{\max}}(\Sigma)$, we define a weighted signed complete graph $(K^{D^{\max}}(\Sigma), w)$ where $w(e)= d_{\max}(u,v)$ and $(K^{D^{\min}}(\Sigma), w)$, where $w(e)= d_{\min}(u,v)$ for an edge $e=uv$. Then
	\begin{center}
		$L(K^{D^{\max}}(\Sigma), w)=L^{\max}(\Sigma)$ and
		$L(K^{D^{\min}}(\Sigma), w)=L^{\min}(\Sigma)$.
	\end{center}
    Thus, by Theorem \ref{detbal}, 
\begin{enumerate}
\item[]    $\det L^{\max}(\Sigma)= 0$ if and only if $K^{D^{\max}}(\Sigma)$ is balanced, and
\item[]    $\det L^{\min}(\Sigma) = 0$ if and only if $K^{D^{\min}}(\Sigma)$ is balanced.
\end{enumerate}	
	Hence, by Theorem \ref{dist} we get the required characterization.
\end{proof}

\section{Signed distance laplacian spectrum}
Now, we move to the signed distance laplacian spectrum of some types of signed graph.
\begin{thm}
	A signed graph $\Sigma$ is balanced if and only if $L^{\max}(\Sigma) = L^{\min}(\Sigma) = L^{\pm}(\Sigma)$ and $L^{\pm}(\Sigma)$ is cospectral with $L(G)$.
\end{thm}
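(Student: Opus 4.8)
The plan is to prove both directions by reducing everything to the switching-equivalence theory of signed graphs together with Theorem \ref{main}. Recall that a balanced signed graph is switching equivalent to its underlying graph $G$ (with all-positive signature), and switching a signed graph conjugates every adjacency-type matrix (and hence the associated distance and distance-Laplacian matrices) by a diagonal $\pm1$ matrix, which preserves spectra.

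For the forward direction, suppose $\Sigma$ is balanced. First I would invoke Theorem \ref{dist}(iv) to conclude $D^{\max}(\Sigma) = D^{\min}(\Sigma) = D^{\pm}(\Sigma)$, so that $L^{\max}(\Sigma) = L^{\min}(\Sigma) = L^{\pm}(\Sigma)$ by definition (L1)--(L2), since the transmission matrix $Tr(G)$ depends only on the underlying graph. Next, since $\Sigma$ is balanced it switches to $G$; one checks that switching by a vertex subset $X$ replaces $D^{\pm}(\Sigma)$ by $S D^{\pm}(\Sigma) S$ where $S = \diag(s_i)$ with $s_i = -1$ for $v_i \in X$ and $+1$ otherwise — this is because a switched path keeps its length but its sign gets multiplied by $s_u s_v$, exactly matching the claim that switching is a sign-consistent operation on all shortest $uv$-paths simultaneously, so $\sigma_{\max}$ and $\sigma_{\min}$ transform the same way. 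Because $Tr(G)$ is diagonal it commutes with $S$ and $S^2 = I$, so $L^{\pm}(\Sigma) = S L(G) S$, which is similar to $L(G)$; hence $L^{\pm}(\Sigma)$ is cospectral with $L(G)$. (Here $L(G) = Tr(G) - D(G)$ is the ordinary distance Laplacian, i.e.\ the all-positive case.)

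For the converse, suppose $L^{\max}(\Sigma) = L^{\min}(\Sigma) = L^{\pm}(\Sigma)$ and $L^{\pm}(\Sigma)$ is cospectral with $L(G)$. Then $\det L^{\pm}(\Sigma) = \det L(G)$. Since $G$ is connected, $L(G)$ is the distance Laplacian of a connected (all-positive, hence balanced) signed graph, so by Theorem \ref{main} applied to $G$ we get $\det L(G) = 0$; therefore $\det L^{\pm}(\Sigma) = 0$, and condition (iv) of Theorem \ref{main} is satisfied, so $\Sigma$ is balanced. This closes the equivalence.

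The main obstacle is the first direction: pinning down precisely how switching acts on $D^{\max}$ and $D^{\min}$. One must verify that switching does not change which shortest paths exist (true, since the underlying graph and distances are untouched) and that it multiplies the sign of every $uv$-path by the same factor $s_u s_v$ (true, since interior switchings cancel in pairs along the path). Consequently $\sigma_{\max}(u,v)$ and $\sigma_{\min}(u,v)$ are each multiplied by $s_u s_v$, giving the clean conjugation $D^{\bullet}(\Sigma^X) = S D^{\bullet}(\Sigma) S$; everything else is routine linear algebra. A small point worth stating explicitly is that the reduction "balanced $\Rightarrow$ switches to $G$" is the standard characterization of balance (Harary's theorem), which we may cite.
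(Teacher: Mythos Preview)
Your proposal is correct and follows essentially the same argument as the paper: the forward direction uses switching-induced similarity $L^{\pm}(\Sigma) = S\,L(G)\,S$ (the paper simply cites \cite{sh} for the conjugation of $D^{\pm}$ under switching, where you spell out the path-sign computation), and the converse uses $\det L^{\pm}(\Sigma) = \det L(G) = 0$ together with Theorem~\ref{main}. The only cosmetic difference is that you invoke Theorem~\ref{dist}(iv) explicitly to get $D^{\max}=D^{\min}$, while the paper just says ``$D^{\pm}(\Sigma)$ exists \cite{sh}.''
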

\begin{proof}
	Suppose $\Sigma=(G,\sigma)$ is balanced. Then $\Sigma$ can be switched to an all positive signed graph $\Sigma^\zeta = (G,+)$. Now, $D^{\pm}(\Sigma)$ exists \cite{sh} and is similar to $D^{\pm}(\Sigma^\zeta)$, that is, $D^{\pm}(\Sigma^\zeta) = SD^{\pm}(\Sigma)S^{-1}$ \cite{sh}.
	So,
	\begin{align*}
	SL^{\pm}(\Sigma)S^{-1} &=S(Tr(G)- D^{\pm}(\Sigma))S^{-1}
%	&=STr(G)S^{-1}- SD^{\pm}(\Sigma)S^{-1}\\
	=Tr(G)- D^{\pm}(\Sigma^\zeta)\\
	&=L^{\pm}(\Sigma^\zeta).
	\end{align*}
	Thus, $L^{\pm}(\Sigma)$ is similiar to $L(G)$, which implies that $L^{\pm}(\Sigma)$ is cospectral with $L(G)$.
	
	Conversely, suppose $L^{\max}(\Sigma) = L^{\min}(\Sigma) = L^{\pm}(\Sigma)$ and $L^{\pm}$ is cospectral with $L(G)$. Thus, $\det L^{\pm}(\Sigma) = \det L(G) = 0$ and hence, by Theorem \ref{main}, $\Sigma$ is balanced.
\end{proof}

A signed graph $\Sigma$ is $t$-transmission regular if  $Tr(v) = \sum_{u \in V(G)} d(v,u)=t $ for all $v \in V(G).$ Odd cycles $C_{2k+1}$ are $k(k+1)$-transmission regular and even cycles $C_{2k}$ are $k^2$-transmission regular.

\begin{thm}
If the signed graph $\Sigma$ is $t$-transmission regular, then the signed distance laplacian eigenvalues of $L^{\max}(\Sigma)(or\  L^{\min}(\Sigma))$ are $t-\lambda$, where $\lambda$ is an eigenvalue of $D^{\max}(\Sigma)(or \ D^{\min}(\Sigma))$.
\end{thm}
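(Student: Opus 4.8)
The plan is to use the definition $L^{\max}(\Sigma) = Tr(G) - D^{\max}(\Sigma)$ directly together with the transmission-regularity hypothesis. First I would observe that if $\Sigma$ is $t$-transmission regular then $Tr(G) = tI_n$, since every diagonal entry of the transmission matrix equals $t$. Substituting this into the definition gives $L^{\max}(\Sigma) = tI_n - D^{\max}(\Sigma)$, and likewise $L^{\min}(\Sigma) = tI_n - D^{\min}(\Sigma)$.

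Next I would pass to eigenvalues. The matrix $D^{\max}(\Sigma)$ is a real symmetric matrix, so it has a full set of eigenvalues; let $\lambda$ be one of them, with associated eigenvector $x \neq 0$, so $D^{\max}(\Sigma)x = \lambda x$. Then
\begin{equation*}
L^{\max}(\Sigma)x = \big(tI_n - D^{\max}(\Sigma)\big)x = tx - \lambda x = (t-\lambda)x,
\end{equation*}
so $t-\lambda$ is an eigenvalue of $L^{\max}(\Sigma)$ with the same eigenvector $x$. Conversely, any eigenvalue $\mu$ of $L^{\max}(\Sigma)$ with eigenvector $x$ satisfies $D^{\max}(\Sigma)x = (t-\mu)x$, so $t-\mu$ is an eigenvalue of $D^{\max}(\Sigma)$; this shows the correspondence $\lambda \mapsto t-\lambda$ is a bijection between the two spectra (preserving multiplicities, since it is realized by a single invertible linear substitution $tI_n - (\cdot)$ acting on the whole matrix). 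The argument for $L^{\min}(\Sigma)$ and $D^{\min}(\Sigma)$ is identical, word for word.

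There is essentially no obstacle here: the only thing to be careful about is the translation of ``$t$-transmission regular'' into ``$Tr(G) = tI_n$'', and the remark in the statement that odd cycles $C_{2k+1}$ are $k(k+1)$-transmission regular and even cycles $C_{2k}$ are $k^2$-transmission regular is a convenient sanity check that such graphs exist. One could also phrase the whole proof in one line by noting that $L^{\max}(\Sigma) = tI_n - D^{\max}(\Sigma)$ and that adding a scalar multiple of the identity and negating shifts every eigenvalue $\lambda$ to $t-\lambda$ without changing eigenvectors or multiplicities; I would likely present it in that compact form, with the eigenvector computation above included for completeness.
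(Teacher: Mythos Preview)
Your proof is correct. The paper actually states this theorem without proof, presumably because it is immediate from the definitions; your argument---that $t$-transmission regularity gives $Tr(G)=tI_n$, hence $L^{\max}(\Sigma)=tI_n-D^{\max}(\Sigma)$, so every eigenvalue $\lambda$ of $D^{\max}(\Sigma)$ becomes $t-\lambda$ for $L^{\max}(\Sigma)$ with the same eigenvector---is exactly the expected justification.
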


For the odd unbalanced cycle $C^{-}_{n}$, where $n=2k+1$, there is a unique shortest path between any two vertices. Thus, $L^{\max}(C^{-}_{n})=L^{\min}(C^{-}_{n})=L^{\pm}(C^{-}_{n})$. The signed distance spectrum of an odd unbalanced cycle is given in \cite{sh}. Thus, we get, the signed distance laplacian spectrum of $C^{-}_{n}$ as an immidiate corollary.
\begin{thm}
	For an odd unbalanced cycle $C^{-}_{n}$, where $n=2k+1$, the spectrum of $L^\pm$ is
	$$\begin{pmatrix}
	k(k+1)-k(-1)^k-\dfrac{1-(-1)^k}{2}  & k(k+1)-\dfrac{k(-1)^j}{\sin((2j+1)\frac{\pi}{2n})}-\dfrac{\sin^2((2j+1)\frac{k\pi}{2n})}{\sin^2((2j+1)\frac{\pi}{2n})} \\
	1 & 2 \quad (j=0,1,2,\ldots, k-1)
\end{pmatrix}.$$
\end{thm}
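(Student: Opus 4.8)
This statement follows by direct substitution into the preceding theorem on $t$-transmission regular signed graphs, so the proof is computational rather than conceptual.

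First I would assemble the three needed ingredients. \emph{(i) Uniqueness of shortest paths.} Since $n=2k+1$ is odd, any two vertices of $C_n$ are joined by exactly one geodesic, so $\sigma_{\max}(u,v)=\sigma_{\min}(u,v)$ for every pair; hence $D^{\max}(C^-_n)=D^{\min}(C^-_n)=D^{\pm}(C^-_n)$ and, adding the common transmission matrix, $L^{\max}(C^-_n)=L^{\min}(C^-_n)=L^{\pm}(C^-_n)$. \emph{(ii) Transmission regularity.} From any vertex of $C_{2k+1}$ the remaining $2k$ vertices lie at distances $1,1,2,2,\dots,k,k$, so $Tr(v)=2(1+2+\cdots+k)=k(k+1)$ for every $v$, i.e.\ $C^-_n$ is $k(k+1)$-transmission regular, as noted in the text. \emph{(iii) The distance spectrum.} The signed distance spectrum of the odd unbalanced cycle $C^-_n$ is computed in \cite{sh}; it consists of one simple eigenvalue together with $k$ eigenvalues each of multiplicity $2$, indexed by $j=0,1,\dots,k-1$, the multiplicities summing to $1+2k=n$.

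With these in hand I would apply the theorem that a $t$-transmission regular signed graph has signed distance laplacian eigenvalues $t-\lambda$, where $\lambda$ ranges over the spectrum of $D^{\pm}$. Taking $t=k(k+1)$ and subtracting each eigenvalue of $D^{\pm}(C^-_n)$ listed in \cite{sh} from $k(k+1)$, carrying the multiplicities along unchanged, produces precisely the displayed list for the spectrum of $L^{\pm}(C^-_n)$. The only points needing care are the accurate transcription of the distance spectrum of $C^-_n$ from \cite{sh} and keeping track of which eigenvalue has multiplicity $1$ and which has multiplicity $2$; there is no genuine obstacle, which is why the result is billed as an immediate corollary. As a consistency check, the eigenvalues produced should sum to $n\,k(k+1)$, which is the trace of $L^{\pm}(C^-_n)$ (the sum of the vertex transmissions), since $D^{\pm}(C^-_n)$ has zero diagonal.
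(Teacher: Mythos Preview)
Your proposal is correct and follows exactly the route the paper indicates: the paper gives no formal proof at all, merely stating that the result is an ``immediate corollary'' obtained by combining the $k(k+1)$-transmission regularity of $C_{2k+1}$, the compatibility of odd cycles, the signed distance spectrum of $C_n^-$ from \cite{sh}, and the preceding theorem on $t$-transmission regular signed graphs. Your write-up is in fact more explicit than the paper's own treatment.
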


\section*{Acknowledgement}
	The first author would like to acknowledge her gratitude to Department of Science and Technology, Govt. of India for the financial support under INSPIRE Fellowship scheme Reg No: IF180462. The second author would like to acknowledge  her gratitude to Science and Engineering Research Board (SERB), Govt. of India, for the financial support under the scheme Mathematical Research Impact Centric Support (MATRICS), vide order no.: File No. MTR/2017/000689.
	
\section*{References}
   \begin{enumerate}
   	
   	\bibitem{ma} M. Aouchiche, P. Hansen, Two Laplacians for the distance matrix of a graph, Linear Algebra Appl.\ 439 (2013) 21--33.
   	\bibitem{jgb} J. G. Broida, S. G. Williamson, {\it Comprehensive Introduction to Linear Algebra},  Addison Wesley, Redwood City, Cal., 1989.
	\bibitem{sh} S.\ Hameed K, Shijin T V, Soorya P, Germina K A, T. Zaslavsky, Signed distance in signed graphs, Linear Algebra Appl.\ 608 (2021) 236--247.
	\bibitem{fh} F. Harary, {\it Graph Theory},  Addison Wesley, Reading, Mass., 1969.
	\bibitem{tz} T. Zaslavsky, Biased graphs. I. Bias, balance, and gains, J. Combinatorial
	Theory Ser.\ B 47 (1989) 32--52.
	
	\end{enumerate}
\end{document}